\newtheorem{theorem}{Theorem}[section]
\newtheorem{proposition}[theorem]{Proposition}
\theoremstyle{definition}\newtheorem{definition}[theorem]{Definition}
\theoremstyle{definition}
\theoremstyle{definition}
\numberwithin{equation}{section}
\newcommand{\bd}{\begin{definition}}
	\newcommand{\ed}{\end{definition}}
\DeclareMathOperator{\ran}{ran}
\newcommand{\enum}{Enum}
\newcommand{\Infinitesubsets}[1]{[#1]^\omega}
\pgfplotsset{soldot/.style={color=blue,only marks,mark=*}}
\title{Tall $F_\sigma$ subideals of tall analytic ideals}
\author{Jan Greb\' ik}
\address{
Mathematics Institute.
University of Warwick,
Coventry CV4~7AL, UK}
\email{jan.grebik@warwick.ac.uk}
\author{Zolt\'an Vidny\'anszky}
\address{Kurt G\"odel Research Center for Mathematical Logic, Universit\"at Wien, W\"ah\-rin\-ger Strasse 25, 1090 Wien, Austria}
\email{zoltan.vidnyanszky@univie.ac.at}
\thanks{The first author was supported by Leverhulme Research Project Grant RPG-2018-424.
The second author was supported by the National Research, Development and Innovation Office -- NKFIH, grant no.~113047 and FWF Grants P29999 and M2779.}
\begin{document}
	\maketitle
	\begin{abstract}
		Answering a question of Hru\v s\' ak, we show that every analytic tall ideal on $\omega$ contains an $F_\sigma$ tall ideal. We also give an example of an $F_\sigma$ tall ideal without a Borel selector.
		
	\end{abstract}
 Recall that an (non-trivial) \emph{ideal} $\mathcal{I}$ on $\omega$ is a collection of subsets of $\omega$ with the following properties for every $A, B \subset \omega$:
 \begin{enumerate}
 	\item if $A, B \in \mathcal{I}$ then $A \cup B \in \mathcal{I}$,
 	\item if $A \in \mathcal{I}$ and $B \subset A$ then $B \in \mathcal{I}$,
 	\item if $A$ is finite, then $A \in \mathcal{I}$,
 	\item $\omega \not \in \mathcal{I}$.
 \end{enumerate}

  Identifying subsets of $\omega$ with their characteristic functions, an ideal can be considered to be a subset of $2^\omega$, and as such, can have definability properties, i.e., being Borel, analytic etc.
  Recall that a set $S \subset 2^\omega$ is analytic if it is a continuous image of the Baire space.
  
  Analytic ideals are remarkably well-behaving, and play a central role in the study of ideals on $\omega$ (see, e.g., \cite{uzcategui2019ideals}, \cite{soleckianalytic}, \cite{todorcevicgaps}). 
  
  One of the properties of ideals that is enjoyed by most natural examples is called tallness.
  A family $H$ of subsets of $\omega$ is \emph{tall} if for every $x \in [\omega]^\omega$ there exists a $y \in [x]^\omega$ with $y \in H$.
  Here $\Infinitesubsets{S}$ $([S]^{<\omega})$ denotes the collection of countably infinite (finite) subsets of a set $S$.
  This notion is fundamental, for example, in the study of \emph{Kat\v etov order} on ideals, see~\cite{hrusak}.
  Recall that an ideal $\mathcal{I}$ is \emph{Kat\v etov below} an ideal $\mathcal{J}$ if there is a function $f:\omega\to \omega$ such that $f^{-1}(A)\in \mathcal{J}$ whenever $A\in \mathcal{I}$.

  It has been asked by Hru\v s\' ak \cite[Question~5.6]{hrusak}, whether every analytic tall ideal contains an $F_\sigma$ tall ideal.
  It is easy to see that this question is equivalent to the seemingly weaker problem whether tall $F_\sigma$ ideals are dense among tall analytic ideals in the Kat\v etov order. 
  We give a simple, affirmative answer to these questions.

	\begin{theorem}
	    \label{t:main}
		Let $\mathcal{I}$ be an analytic tall ideal on $\omega$. Then there exists an $F_\sigma$ tall ideal $\mathcal{J} \subset \mathcal{I}$.
		In particular, tall $F_\sigma$ ideals are dense among tall analytic ideals in the Kat\v etov order.
	\end{theorem}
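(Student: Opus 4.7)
The plan is to represent $\mathcal{I}$ as the projection of a closed set $F \subseteq 2^\omega \times \omega^\omega$, so that $\mathcal{I} = \{A \in 2^\omega : \exists f \in \omega^\omega,\ (A,f) \in F\}$, and then to build an $F_\sigma$ tall sub-ideal by bounding the witness component. For each $g \in \omega^\omega$ the set $\mathcal{K}_g = \{A : \exists f \leq g,\ (A,f) \in F\}$ is the projection onto $2^\omega$ of the compact set $F \cap (2^\omega \times [0,g])$ and is therefore closed in $2^\omega$. After a standard re-encoding of $F$ so that a witness for a finite union of $\mathcal{I}$-sets can be canonically assembled from witnesses for the parts, the hereditary closure $\mathcal{K}_g^{\downarrow}$ is closed, and for any increasing sequence $(g_n)_n$ the set $\mathcal{J} = \bigcup_n \mathcal{K}_{g_n}^{\downarrow}$ is an $F_\sigma$ ideal contained in $\mathcal{I}$; it is proper since $\omega \notin \mathcal{I}$.

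The crux is to choose $(g_n)_n$ making $\mathcal{J}$ tall: for every $x \in [\omega]^\omega$ there must be an infinite $y \subseteq x$ contained in some $A \in \mathcal{I}$ admitting a witness $f \leq g_n$ for some $n$. Tallness of $\mathcal{I}$ yields such a $y$ (and witness $f$) for each $x$, but the witnesses can a priori have arbitrary growth, so dominating them uniformly by countably many bounds requires extra structure; this is the main obstacle. To address it I would apply the Galvin-Prikry theorem to the analytic set $\mathcal{I} \cap [\omega]^\omega$: tallness excludes the alternative that $[y]^\omega$ is disjoint from $\mathcal{I}$, so every $x$ admits some $y \in [x]^\omega$ with $[y]^\omega \subseteq \mathcal{I}$. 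The family $\mathcal{H}$ of such $y$'s is a tall, hereditary, finite-union-closed sub-ideal of $\mathcal{I}$, and for each $y \in \mathcal{H}$ the entire family $\{A : A \subseteq y\}$ lies in $\mathcal{I}$.

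Working inside $\mathcal{H}$, I would construct $(g_n)_n$ by a diagonalization against a countable dataset extracted from the analytic representation of $\mathcal{I}$: for instance, finite approximations in the tree coding $F$ together with a Borel uniformization selecting members of $\mathcal{H}$ adapted to these approximations, and then choosing $g_n$ at each stage large enough to dominate witnesses for all data considered so far. The Ramsey-closedness of $\mathcal{H}$ together with the fact that subsets of each $y \in \mathcal{H}$ have witnesses coming from a single compact family should ensure that the resulting $\mathcal{J}$ captures an infinite subset of every $x \in [\omega]^\omega$, producing the desired $F_\sigma$ tall ideal inside $\mathcal{I}$. The ``in particular'' clause on density in the Kat\v etov order then follows immediately, via the identity function applied to $\mathcal{J} \subseteq \mathcal{I}$.
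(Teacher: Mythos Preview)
Your strategy---represent $\mathcal I$ as the projection of a closed $F\subseteq 2^\omega\times\omega^\omega$, bound the witness coordinate, and take a countable union of the resulting closed slices---is the natural first attempt, but the argument as written does not close. The decisive gap is in the final paragraph. From Galvin--Prikry you correctly extract, for each $x$, some $y\in[x]^\omega$ with $[y]^\omega\subseteq\mathcal I$; however, the assertion that ``subsets of each $y\in\mathcal H$ have witnesses coming from a single compact family'' is unjustified and in general false. That $\mathcal P(y)\subseteq\mathcal I$ says nothing about how large the $\omega^\omega$-witnesses for those subsets must be, and a Borel (or measurable) selection $A\mapsto f_A$ over the compact set $\mathcal P(y)$ need not have bounded, or even $\sigma$-bounded, range in $\omega^\omega$. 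Without such a bound there is no mechanism for choosing countably many $g_n$ that work for \emph{every} $x$ simultaneously, and the ``diagonalization against a countable dataset'' remains a hope rather than a construction. (The ``standard re-encoding'' making the slices closed under finite unions is also left implicit, though that part is routine.)

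The paper sidesteps this obstacle by changing the encoding rather than trying to bound witnesses after the fact. One writes $\mathcal I'=\{\mathrm{Enum}(x):x\in\mathcal I\cap[\omega]^\omega\}$ as the projection of a closed $F\subseteq INC\times INC$ (strictly increasing functions) and considers the continuous map $\Psi(f,g)=\mathrm{ran}(f\circ g)$. Two things happen automatically. First, $\mathrm{ran}(f\circ g)\subseteq\mathrm{ran}(f)\in\mathcal I$, so for any $y\in\mathcal I\cap[\omega]^\omega$ and any witness $g$ one gets an infinite subset of $y$ in $\Psi(F)$; this gives tallness without any choice of bounds. Second, if $\Psi(f_n,g_n)\to x$ with $x$ infinite, then monotonicity of increasing functions forces both $f_n$ and $g_n$ to be eventually pointwise dominated by $\mathrm{Enum}(x)$, so a subsequence converges in $F$ and the limit witnesses $x\in\mathcal I$. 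Hence $H=\overline{\Psi(F)}$ is a \emph{closed} tall subset of $\mathcal I$, and the ideal it generates is $F_\sigma$. The idea you are missing is precisely this: a representation in which convergence of the \emph{outputs} automatically bounds the \emph{inputs}, so that no hand-picked sequence $(g_n)$ is needed at all.
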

\begin{proof}
	First we find a closed tall set $H \subset \mathcal{I}$.
	
	Let us denote by $INC$ the collection of the strictly increasing $\omega \to  \omega$ functions, note that this is a Polish space with the topology inherited from $\omega^\omega$. For $x \in [\omega]^\omega$ we will denote by $\enum(x)$ the increasing enumeration of the elements of $x$.
	
	Let
	$$\mathcal{I}'=\{\enum(x): x\in  \mathcal{I}\cap [\omega]^\omega\}.$$
	Clearly, $\mathcal{I}'$ is analytic. 
    By standard descriptive set theoretic facts (see, \cite[Exercise~14.3]{kechrisbook}) we can pick a closed set $F \subset  INC \times INC$ such that $\mathcal{I}'$ is the projection of $F$ to the first coordinate. Let us define a map $\Psi:F \to 2^\omega$ by letting 
	\[\Psi(f,g)(n)=1 \iff n \in \ran(f \circ g),\] for every $n 
	\in \omega$. Clearly, $\Psi$ is continuous and $\Psi(f,g)\in [\omega]^\omega$ for every $(f,g)\in F$.
	
	We claim that the set $H=\overline{\Psi(F)}$ works. First note that $H$ is tall: indeed, if $x\in \Infinitesubsets{\omega}$ then there exists $y \in \Infinitesubsets{x}$ with $y \in \mathcal{I}$, in turn there exists some $g \in INC$ such that $(\enum(y),g) \in F$. So, $\Psi(\enum(y),g) \subset \ran(\enum(y))=y \subset x$.
	
	Second, we show that $H \subset \mathcal{I}$. Towards a contradiction, assume that $x \in H \setminus \mathcal{I}$. Then, by $[\omega]^{<\omega} \subset \mathcal{I}$ we have that $x$ is infinite. Pick a sequence $(f_n,g_n)_{n \in \omega}$ of elements of $F$ such that $\Psi(f_n,g_n) \to x$. Let $h=\enum(x)$. Then, for any $k \in \omega$ there exists an $n_k$, such that $(f_{m} \circ g_{m})(k)=h(k)$ holds for $m \geq n_k$. In particular, using monotonicity, we get that $g_m(k) \leq h(k)$ and $f_m(k) \leq (f_m\circ g_m)(k) \leq h(k)$, for every $m \geq n_k$. But then there is $(f,g)\in INC\times INC$ and a subsequence such that $(f_{n_l},g_{n_l})_{l \in \omega} \to (f,g)$. By the continuity of $\Psi$, and $F$ being closed we have $\Psi(f,g)=x$ and, as $f \in \mathcal{I}'$, $x \subset \ran(f) \in \mathcal{I}$, a contradiction.

	In order to finish the proof of the theorem we just need the following easy observation:
	\[\text{If $H \subset 2^\omega$ is $\sigma$-compact, then so is the ideal generated by $H$.} \tag{*}\]
	Indeed, if $S \subset 2^\omega$ is compact, then so are the sets $\{x: (\exists y \in S)(x \subset y)\}$ and $\{x \cup y: x,y \in S\}$ (the latter one is by the continuity of the $\cup$ operation). Thus, the analogous statement holds for $\sigma$-compact sets, as well.
	
	So, the ideal generated by $H\cup [\omega]^{<\omega}$ is $\sigma$-compact, tall, and contained in $\mathcal{I}$.
	
\end{proof}
    Let us point out that the idea behind the above argument has already appeared in \cite{solovay} and \cite{toden}.

    In what follows, we construct two $F_\sigma$ tall ideals without Borel selectors (recall that a \emph{selector} for a tall ideal on $\omega$ is a map $\phi:[\omega]^\omega \to \mathcal{I}$ such that for each $S \in [\omega]^\omega$ we have $\phi(S) \in [S]^\omega\cap \mathcal{I}$). 
    The existence of such an object has been proved by Uzcategui and the first author \cite{grebik}, without giving an explicit example. 
    
    Our first example is based on Theorem \ref{t:main} and will use effective descriptive set theory, while the second example will be completely elementary. An argument for including the first one is that it uses a peculiar analytic $\sigma$-ideal, which seems to be new. 
    
    We will define an ideal on $2^{<\omega}$ (a countable set). Note that $2^{<\omega}$ is endowed with a natural tree structure, and the branches of the tree determine a continuum size family of pairwise almost disjoint subsets of $2^{<\omega}$. 
    
    For $f \in 2^\omega$ let us denote by $f'$ the set $\{f\restriction n: n \in \omega\}\subseteq 2^{<\omega}$. In the definition below we will use notions of effective descriptive set theory (see, e.g., \cite{moschovakis2009descriptive}). In order to do that we have to fix a recursive bijection identifying $\omega$ with $2^{<\omega}$, and hence endowing $\mathcal{P}(2^{<\omega})$ with a recursive Polish structure.
    
    Let $S \in A_0$ iff $S \subset  2^{<\omega}$ and $S$ is an antichain, and let $S \in A_1$ iff 
    \[\exists f \in 2^\omega \ (S \subset f' \land \forall T \in \Delta^1_1(f') (|S \cap T|<\infty \lor |S \setminus T|<\infty)).\]
    
    Roughly speaking, we collect those subsets of a given $f'$ which cannot be split into two infinite pieces by a real computable from $f'$. 
    
    By the Spector-Gandy theorem, the set $A_1$ is analytic, and, clearly, the set $A_0$ is closed. It is not hard to check that the ideal $\mathcal{I}$ generated by $A_0 \cup A_1$ is analytic, tall  and admits no Borel selector. Now, an application of Theorem \ref{t:main} to $\mathcal{I}$ yields an $F_\sigma$ tall ideal without a Borel selector.
    
    Finally, let us construct the second, more concrete example. The ideal is defined on $2^{<\omega}$.
    Again, let $S \in A_0$ iff $S\subset 2^{<\omega}$ is an antichain. Moreover, fix a closed set $F \subset 2^\omega \times \omega^\omega$ that projects onto $2^\omega$ and admits no Borel unformization (see, \cite[Section 18]{kechrisbook} or \cite[4D.11]{moschovakis2009descriptive}). Let $S \in B$ if and only if
    \[\exists f \in 2^\omega \ (S \subset f' \land \exists g \in \omega^\omega\ ((f,g) \in F \land g \leq \enum(S))),\]
     where we abuse the notation and write $\enum(S)$ for the enumeration of indices $n\in \mathbb{N}$ such that $f\upharpoonright n\in S$, and $\leq$ stands for the pointwise ordering of functions from $\omega^\omega$. 
     
     Finally, let $\mathcal{J}$ be the ideal generated by $A_0 \cup B$. 
     
     \begin{proposition}
     $\mathcal{J}$ is an $F_\sigma$ tall ideal that doesn't admit a Borel selector.
     \end{proposition}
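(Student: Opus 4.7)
The argument has three ingredients: tallness, $F_\sigma$-ness, and the absence of a Borel selector.

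Tallness is the most direct. Given an infinite $S \subseteq 2^{<\omega}$, the Ramsey dichotomy on pairs of (in)comparable elements of $S$ yields either an infinite antichain, which lies in $A_0 \subseteq \mathcal{J}$, or an infinite chain $S_0 \subseteq f'$ for a unique branch $f$. In the chain case, since $F$ projects onto $2^\omega$ I fix $g$ with $(f, g) \in F$ and thin $S_0$ to an infinite $S' \subseteq S_0$ with $\enum(S')(k) \geq g(k)$ for every $k$ -- possible because the set of lengths of elements of $S_0$ is unbounded. Then $(f, g)$ witnesses $S' \in B \subseteq \mathcal{J}$.

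For $F_\sigma$-ness my plan is to apply the observation $(\ast)$ from the proof of Theorem \ref{t:main}, reducing the problem to $\sigma$-compactness of the generating family $A_0 \cup B$ in $2^{2^{<\omega}}$. The antichain set $A_0$ is closed. For $B$, I observe that for each $(f, g) \in F$ the set
\[
V_{f, g} = \{S : S \subseteq f' \text{ and } |S \cap \{f{\restriction}n : n < g(k)\}| \leq k \text{ for all } k\}
\]
is closed (an intersection of clopen conditions) and is contained in $B$, with $B = \bigcup_{(f,g) \in F} V_{f, g}$. Alternatively, and more robustly, I would define the submeasure $\phi(T) = \min\{k : T \text{ is a union of } k \text{ sets from } A_0 \cup B\}$ and check that it is lower semicontinuous by a compactness-plus-diagonalization argument mirroring the closure step in the proof of Theorem \ref{t:main}: if $T_n \to T$ with decompositions $T_n = X^n_1 \cup \cdots \cup X^n_k$ witnessed by $(f^n_i, g^n_i) \in F$ with $g^n_i \leq \enum(X^n_i)$, the pointwise convergence $\enum(X^n_i) \to \enum(X_i)$ (for infinite limit $X_i$) forces the $g^n_i$ to be eventually bounded pointwise, so a subsequential limit $(f_i, g_i) \in F$ witnesses $X_i \in B$. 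The delicate point -- and the main technical obstacle in this part -- is handling $B$-components that degenerate to a finite set in the limit: these finite remnants must be absorbed into a controlled number of additional antichain components, after which $\mathcal{J} = \bigcup_k \{\phi \leq k\}$ is $F_\sigma$ by Mazur's theorem.

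For the absence of a Borel selector I assume toward contradiction that $\phi\colon [2^{<\omega}]^\omega \to \mathcal{J}$ is one and derive a Borel uniformization of $F$, contradicting the hypothesis on $F$. For every $f \in 2^\omega$, $\phi(f') \subseteq f'$ is an infinite chain in $\mathcal{J}$; writing a decomposition of $\phi(f')$ into finitely many antichain and finitely many $B$-components, and noting that antichain components meet the chain $\phi(f')$ in at most one point each, some infinite subset $C_f \subseteq \phi(f')$ must lie in $B$, producing $g_f \in F_f$ with $g_f \leq \enum(C_f)$. The main task is to choose $C_f$ and hence $g_f$ Borel-uniformly in $f$. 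I would use the $F_\sigma$-stratification $\mathcal{J} = \bigcup_k \{\phi \leq k\}$ from the previous paragraph to Borel-measure the complexity $k(f)$, apply a closed-valued Borel selection theorem (Kuratowski--Ryll-Nardzewski) to the multifunction sending $f$ to the set of valid $k(f)$-decompositions of $\phi(f')$, and finally take the lex-least selection in the compact fibers of the closed set $F^* = \{(f, h) : \exists g \leq h, (f, g) \in F\}$, yielding the desired Borel uniformization of $F$.
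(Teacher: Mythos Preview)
Your tallness argument matches the paper's.

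For $F_\sigma$-ness, the $V_{f,g}$ observation does not help: you have written $B$ as an uncountable union of closed sets, which says nothing about $\sigma$-compactness. In the submeasure route, the obstacle you flag is real and you do not resolve it. A limit of $B$-sets can be a finite chain of arbitrary length $m$, and since $B$-sets are infinite and antichains meet a chain in at most one point, such a chain needs exactly $m$ generators from $A_0\cup B$; hence your $\phi$ is \emph{not} lower semicontinuous and the stratification $\{\phi\le k\}$ need not be closed. The cure is to abandon the submeasure wrapper: your own diagonalization sketch, specialized to a single component, already proves that $\overline{B}\subseteq B\cup[2^{<\omega}]^{<\omega}$. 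Then $A_0\cup\overline{B}\cup[2^{<\omega}]^{<\omega}$ is $\sigma$-compact and generates $\mathcal{J}$, so $(\ast)$ finishes. This is precisely what the paper does.

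For the absence of a Borel selector, your plan is heavier than necessary but essentially sound, with one caveat: the multifunction ``decompositions of $\phi(f')$ into pieces from $A_0\cup B$'' does not have closed values because $B$ is not closed; you must select from decompositions into pieces from $A_0\cup\overline{B}$, then note that an infinite component of the chain $\phi(f')$ lying in $\overline{B}$ automatically lies in $B$. The paper bypasses all of this by quoting a result of Greb\'{\i}k--Uzc\'ategui that lets one assume the selector already takes values in the closed generating set $K=A_0\cup\overline{B}$, whence $S(f')\in B$ for every $f$. Then $G=\{(f,g)\in F: g\le \enum(S(f'))\}$ is a Borel subset of $F$ with compact vertical sections projecting onto $2^\omega$, and a single application of the $K_\sigma$-section uniformization theorem yields a Borel uniformization of $F$, the desired contradiction. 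Your KRN-based argument is effectively an ad hoc reproof of that black-box reduction.
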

     
     \begin{proof}
         We first check that the set $A_0 \cup B \cup [2^{<\omega}]^{<\omega}$ is $\sigma$-compact in $\mathcal{P}(2^{<\omega})$.
         This is sufficient to show that $\mathcal{J}$ is $F_\sigma$ by $(*)$ above. The argument will be rather similar to the proof of Theorem \ref{t:main}.
         
         Since $A_0$ is closed, it is enough to show that $\overline{B}\subseteq B\cup [2^{<\omega}]^{<\omega}$.
         Let $x_n\to x$ be a convergent sequence where $x_n\in B$, and suppose that $x\not\in [2^{<\omega}]^{<\omega}$. 
         Pick witnesses $(f_n,g_n)\in 2^\omega\times \omega^\omega$ to $x_n$, that is, $(f_n,g_n) \in F$ and $g_n \leq \enum(x_n)$.
         It is easy to see that there is an $f\in 2^\omega$ such that $f_n\to f$ and $x\subseteq f'$.
         Similarly, one gets $\enum(x_n)\to \enum(x)$.
         Hence, there must be $g\in \omega^\omega$ and a subsequence $g_{n_l}\to g$ in $\omega^\omega$ becuase $g_{n_l}\le \enum(x_n)$.
         So, we have $(f_{n_l},g_{n_l})\to (f,g)$ and $g\le \enum(x)$.
         Consequently, $x\in B$ because $F$ is closed.

         
         To see that $\mathcal{J}$ is tall it is enough to realize that every $x\in [2^{<\omega}]^{\omega}$ either satisfies $|f'\cap x|=\omega$ for some $f\in 2^\omega$ or can be covered by finitely many antichains by K\" onig's theorem.
         In the former case one can easily find an infinte subset of $x$ that dominates some $g$ where $(f,g)\in F$.
         
         Suppose for a contradiction that $\mathcal{J}$ admits a Borel selector $S$.
         Let $K=A_0\cup\overline{B}$.
         It follows from the second paragraph above that $K$ is closed, $K\subseteq \mathcal{J}$, $K$ generates $\mathcal{J}$ and $B=\overline{B}\cap [2^{<\omega}]^{\omega}$.
         By \cite[Proposition~4.5]{grebik} we may assume that $S(x)\in K$ for every $x\in [2^{<\omega}]^\omega$.
         In particular, $S(f')\in B$ for every $f\in 2^\omega$.
         Define 
         $$G=\{(f,g)\in F:g\le \enum(S(f'))\}.$$
         Then $G\subseteq F$ is a Borel set that has $\sigma$-compact vertical sections and projects to $2^\omega$ by the definition of $B$.
         Such a set has a Borel uniformization by \cite[Theorem 18.18]{kechrisbook}, contradicting the choice of $F$.

     \end{proof}

	\bibliographystyle{abbrv}
	\bibliography{bblhrusak}
	
\end{document}